\newtheorem{dfn} [subsection]{Definition}
\newtheorem{obs} [subsection]{Remark}
\newtheorem{exm} [subsection]{Example}
\newtheorem{prop}[subsection]{Proposition}
\newtheorem{teor}[subsection]{Theorem}
\newtheorem{lema}[subsection]{Lemma}
\newtheorem{cor} [subsection]{Corollary}
\def\Ass{\operatorname{Ass}}
\def\reg{\operatorname{reg}}
\begin{document}

\selectlanguage{english}
\frenchspacing

\large
\begin{center}
\textbf{Regularity of quasi-symbolic and bracket powers of Borel type ideals}

Mircea Cimpoea\c s
\end{center}

\normalsize

\begin{abstract}
In this paper, we show that the regularity of the q-th quasi-symbolic power $I^{((q))}$ and the regularity of the q-th bracket power $I^{[q]}$ of a monomial ideal of Borel type $I$, satisfy the relations $\reg(I^{((q))})\leq q\reg(I)$, respectively $\reg(I^{[q]})\geq q\reg(I)$. Also, we give an upper bound for $\reg(I^{[q]})$.

\vspace{5 pt} \noindent \textbf{Keywords:} Monomial ideals, Borel type ideals, Mumford-Castelnuovo regularity.

\vspace{5 pt} \noindent \textbf{2000 Mathematics Subject
Classification:}Primary: 13P10, Secondary: 13E10.
\end{abstract}

\section*{Introduction}

Let $K$ be an infinite field, and let $S=K[x_1,\ldots,x_n],n\geq 2$ the polynomial ring over $K$.
Bayer and Stillman \cite{BS} note that Borel fixed ideals $I\subset S$ satisfy the following property:
\[(*)\;\;\;\;(I:x_j^\infty)=(I:(x_1,\ldots,x_j)^\infty)\;for\; all\; j=1,\ldots,n.\] Herzog, Popescu and Vladoiu \cite{hpv} define a monomial ideal $I$ to be of \emph{Borel type} if it satisfies $(*)$. We mention that this concept appears in \cite[Definition 1.3]{CS} as the so called {\em weakly stable ideal}. Also, this concept appears in \cite[Definition 3.1]{BG}, as the so called \emph{monomial ideal of nested type}. We further studied this class of monomial ideals in \cite{mircea} and \cite{mir}.

In the first section, we recall some results regarding ideals of Borel type. Also, we discuss the relation between the sequential chain of an ideal of Borel type $I$, defined in \cite{hpv}, and the primary decomposition of $I$. 

Let $I\subset S$ be a monomial ideal and $I=\bigcap_{i=1}^r Q_i$ the an irreduntant primary decomposition of $I$, obtained in a canonical way. We define $I^{((q))}:=\bigcap_{i=1}^r Q_i^q$, the $q$-th \emph{quasi-symbolic} power of $I$, see Definition $2.1$. We prove that if $I$ is an ideal of Borel type, then $I^{((q))}$ and $I^{[q]}$ are also ideals of Borel type, where $I^{[q]}=(u^q\;:\; u\in I $ monomial) is the $q$-th bracket power of $I$. 

In \cite{mir}, we proved that $\reg(I^q)\leq q \reg (I)$. We give a similar result for the $q$-th quasi-symbolic power. More precisely, we prove that $\reg(I^{((q))})\leq q\reg(I)$, see Theorem $2.4$. Also, we prove that $\reg(I^{[q]})\geq q \reg(I)$, see Theorem $2.6$. In Proposition $2.11$, we prove that $\reg(I^{[q]})\leq q\reg(I)+(q-1)(n-1)$.

\footnotetext[1]{The support from grant ID-PCE-2011-1023 of Romanian Ministry of Education, Research and
Innovation is gratefully acknowledged.}

\pagebreak
\section{Some basic facts on Borel type ideals.}

Firstly, we recall the following equivalent characterizations of ideals of Borel type given in \cite{hpv} and in \cite{BG}.

\begin{prop}
Let $I\subset S$ be a monomial ideal. The following conditions are equivalent:

(a) $I$ is an ideal of Borel type.

(b) For any $1\leq j<i \leq n$, we have $(I:x_i^{\infty}) \subset (I:x_j^{\infty})$.

(c) Each $P\in Ass(S/I)$ has the form $P=(x_1,\ldots,x_m)$ for some $1\leq m\leq n$.



\end{prop}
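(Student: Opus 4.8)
The plan is to establish the cyclic chain of implications $(a)\Rightarrow(b)\Rightarrow(c)\Rightarrow(a)$, using only standard properties of monomial ideals: each associated prime of $S/I$ is a monomial prime $P_F=(x_i\mid i\in F)$ and can be written as a colon ideal $(I:u)$ with $u$ a monomial; saturation by an ideal reverses inclusions, i.e. $(I:J^\infty)\subseteq(I:J'^\infty)$ whenever $J'\subseteq J$; and $I$ admits an irredundant primary decomposition by monomial primary ideals whose radicals are precisely the elements of $\operatorname{Ass}(S/I)$.

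For $(a)\Rightarrow(b)$ I would simply note that for $1\leq j<i\leq n$ one has $(x_j)\subseteq(x_1,\ldots,x_i)$, hence $(I:x_i^\infty)=(I:(x_1,\ldots,x_i)^\infty)\subseteq(I:x_j^\infty)$, the first equality being $(a)$ and the inclusion following since $x_j^N\in(x_1,\ldots,x_i)^N$. For $(b)\Rightarrow(c)$, take $P\in\operatorname{Ass}(S/I)$ and write $P=P_F=(I:u)$ for a monomial $u$; I claim $F$ is an initial segment. If $i\in F$ and $1\leq j<i$, then $x_iu\in I$ gives $x_i^ku\in I$ for all $k\geq1$, so $u\in(I:x_i^\infty)$; by $(b)$, $u\in(I:x_j^\infty)$, whence $x_j^N\in(I:u)=P$ for some $N$, and primeness of $P$ forces $x_j\in P$, i.e. $j\in F$. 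Thus $F=\{1,\ldots,\max F\}$ and $P=(x_1,\ldots,x_{\max F})$.

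The substantive step is $(c)\Rightarrow(a)$. I would fix an irredundant monomial primary decomposition $I=\bigcap_{k=1}^{r}Q_k$ and set $P_k=\sqrt{Q_k}$, so that by $(c)$ we have $P_k=(x_1,\ldots,x_{m_k})$ for suitable $m_k$. The key computation is the formula $(I:\mathfrak p^\infty)=\bigcap_{\mathfrak p\not\subseteq P_k}Q_k$ for a monomial prime $\mathfrak p$: if $\mathfrak p\subseteq P_k$ then $\mathfrak p^N\subseteq P_k^N\subseteq Q_k$ for $N\gg0$, so $(Q_k:\mathfrak p^\infty)=S$; if $\mathfrak p\not\subseteq P_k$, choose $x_t\in\mathfrak p\setminus P_k$, and since $x_t\notin\sqrt{Q_k}$ it is a nonzerodivisor modulo $Q_k$, giving $(Q_k:\mathfrak p^\infty)\subseteq(Q_k:x_t^\infty)=Q_k$. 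Applying this with $\mathfrak p=(x_j)$ and with $\mathfrak p=(x_1,\ldots,x_j)$, and using that both $(x_j)\subseteq P_k$ and $(x_1,\ldots,x_j)\subseteq P_k$ are equivalent to $j\leq m_k$ precisely because $P_k$ is an initial-segment prime, one obtains $(I:x_j^\infty)=\bigcap_{j>m_k}Q_k=(I:(x_1,\ldots,x_j)^\infty)$, which is $(a)$.

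I expect the main difficulty to be bookkeeping rather than genuine depth: identifying exactly which primary components survive a saturation, and then seeing that the two saturations $(I:x_j^\infty)$ and $(I:(x_1,\ldots,x_j)^\infty)$ coincide — this hinges on the fact that an initial-segment prime contains $x_j$ if and only if it contains the whole ideal $(x_1,\ldots,x_j)$. Conceptually the heart of the statement is $(b)\Rightarrow(c)$: the inclusions $(I:x_i^\infty)\subseteq(I:x_j^\infty)$ for $j<i$ are exactly what force every associated prime of $S/I$ to be downward closed in the variable order, which is the combinatorial essence of "Borel type".
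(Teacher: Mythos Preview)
Your proof is correct and self-contained; each of the three implications is handled cleanly, and the only delicate point --- that for an initial-segment prime $P_k=(x_1,\ldots,x_{m_k})$ the conditions $(x_j)\subseteq P_k$ and $(x_1,\ldots,x_j)\subseteq P_k$ are both equivalent to $j\leq m_k$ --- is exactly what you identify and use in $(c)\Rightarrow(a)$.

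Note, however, that the paper does not supply its own proof of this proposition: it is stated as a recalled result, with the equivalences attributed to Herzog--Popescu--Vl\u{a}doiu and to Bermejo--Gimenez. So there is no in-paper argument to compare against. Your cyclic argument $(a)\Rightarrow(b)\Rightarrow(c)\Rightarrow(a)$ via primary decomposition is the standard route found in those references, and nothing in it is superfluous. One minor stylistic remark: in $(b)\Rightarrow(c)$ you might make explicit that $(I:x_i)\subseteq(I:x_i^{\infty})$ (which you use when passing from $x_iu\in I$ to $u\in(I:x_i^{\infty})$), but this is immediate from the ascending chain $(I:x_i)\subseteq(I:x_i^{2})\subseteq\cdots$.
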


Let $I\subset S$ be a monomial ideal of Borel type. Since each prime ideal $P\in Ass(S/I)$ is of the form $P=(x_1,\ldots,x_m)$ for some $1\leq m\leq n$, we can assume that $I$ has an irredundant primary decomposition:
\begin{equation}
 \label{1}
 I=\bigcap_{i=1}^r Q_{i};\;\;such\;that\;P_{i} := \sqrt{Q_{i}} = (x_1,\ldots,x_{n_{i-1}}),\;\;n\geq n_0 > n_1 > \cdots > n_{r-1}\geq 1.\;
\end{equation}

For each $0 \leq i \leq r-1$, we define $I_{i}:=\bigcap_{j=i+1}^{r} Q_{j}$. We claim that $I_{i+1}=(I_{i}:x_{n_{i}}^{\infty})$ for all $0\leq i \leq r-1$. Indeed, since $Q_{i+1}$ is $P_{i+1}$-primary, it follows that there exists a positive integer $k$ such that $x_{n_{i}}^k \in Q_{i+1}$. So
$(I_{i}:x_{n_i}^{\infty}) \supseteq ((Q_{i+1}\cdot I_{i+1}): x_{n_i}^{\infty}) \supseteq (x_{n_{i}}^k \cdot I_{i+1} : x_{n_i}^{\infty}) = I_{i+1}$. For the converse inclusion, note that $(I_{i}:x_{n_i}^{\infty}) \subseteq 
(Q_{i+1}:x_{n_i}^{\infty})\cap (I_{i+1}:x_{n_i}^{\infty}) = S \cap I_{i+1} = I_{i+1}$.

Thus, the chain of ideals $I = I_0 \subset I_1 \subset \cdots \subset I_{r-1}\subset I_r := S$ is the \emph{sequential chain} of $I$, as it was defined in \cite{hpv}. Note that $n_i=\max\{j:\;x_j|u\;$ for some $u\in G(I_i) \}$, where we denoted by $G(I_i)$ the set of minimal monomial generators of $I_i$.

Let $J_{i}$ be the monomial ideal generated by $G(I_{i})$ in $S_{i}:=K[x_1,\ldots,x_{n_{i}}]$, $0\leq i\leq r$. Then, the saturation $J_{i}^{sat} = (J_{i}:\mathbf{m}_{i}^{\infty})$ is generated by the elements of $G(I_{i+1})$, where $\mathbf{m}_{i}=(x_1,\ldots,x_{n_i})S_i$. It follows that
$ I_{i+1}/I_{i} \cong (J_{i}^{sat} / J_{i})[x_{n_{i}+1},\ldots,x_n]$. 

It would be appropriate to recall the definition of the Castelnuovo-Mumford regularity. We refer the reader to \cite{E} for further details on the subject.

\begin{dfn}
Let $K$ be an infinite field, and let $S=K[x_1,\ldots,x_n]$, $n\geq 2$ the polynomial ring over $K$. Let $M$ be a finitely generated graded $S$-module. The Castelnuovo-Mumford regularity $\reg(M)$ of $M$ is
\[ \max_{i,j} \{ j-i:\; \beta_{ij}(M)\neq 0 \}, \] 
where $\beta_{ij}(M)=dim_K (Tor_i(K,M))_j$ denotes the $ij$-th graded Betti number of $M$.
\end{dfn}

If $M=\bigoplus_{t\geq 0}M_t$ is an artinian graded $S$-module, we denote $s(M)=\max\{t:\;M_t\neq 0\}$. Herzog, Popescu and Vl\u adoiu
proved the following formula for the regularity of a monomial ideal of Borel type:

\begin{prop}\cite[Corollary 2.7]{hpv} If $I$ is a Borel type ideal, with the notations above, we have
\[ \reg(I)=\max\{ s(J_0^{sat}/J_0), \ldots, s(J_{r-1}^{sat}/J_{r-1})\} + 1. \]
\end{prop}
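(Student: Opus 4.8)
The plan is to reduce the formula to a statement about $S/I$ via the standard identity $\reg(I)=\reg(S/I)+1$ (valid here because $I$ is a proper nonzero monomial ideal, so $\reg(I)\geq 1$), and then to compute $\reg(S/I)$ by using the sequential chain $I=I_0\subset I_1\subset\cdots\subset I_r=S$ as a filtration of $S/I$. Setting $F_i:=I_{i+1}/I$ we obtain a chain
\[ 0\subset F_0\subset F_1\subset\cdots\subset F_{r-1}=S/I,\qquad F_i/F_{i-1}\cong I_{i+1}/I_i\cong (J_i^{sat}/J_i)[x_{n_i+1},\ldots,x_n]. \]
First I would pin down the regularity of each quotient. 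Since $J_i^{sat}=(J_i:\mathbf{m}_i^\infty)$, the module $N_i:=J_i^{sat}/J_i$ is killed by a power of $\mathbf{m}_i$, hence is a nonzero artinian graded $S_i$-module, so $\reg_{S_i}(N_i)=s(N_i)=s(J_i^{sat}/J_i)$; and extending scalars along $S_i\hookrightarrow S$ just tensors a minimal graded free resolution with a polynomial ring, leaving all graded Betti numbers unchanged, so $\reg(I_{i+1}/I_i)=s(J_i^{sat}/J_i)$. Inserting this into the inequality $\reg(B)\leq\max\{\reg(A),\reg(C)\}$ for short exact sequences $0\to A\to B\to C\to 0$, applied along the filtration, already yields $\reg(S/I)\leq\max_i s(J_i^{sat}/J_i)$, which is the easy half of the formula.

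For the reverse inequality the structural point is that $M_i:=I_{i+1}/I_i=N_i[x_{n_i+1},\ldots,x_n]$ is Cohen--Macaulay of dimension $n-n_i$ (the sequence $x_{n_i+1},\ldots,x_n$ is $M_i$-regular and the residual module $N_i$ has depth $0$), and that these dimensions are \emph{strictly increasing} in $i$, since $n_0>n_1>\cdots>n_{r-1}$. Hence $H^j_{\mathbf{m}}(M_i)$ is concentrated in the single cohomological degree $j=n-n_i$, and the degrees occurring for distinct $i$ are pairwise different. I would then prove, by induction on $r$ using the long exact local-cohomology sequence attached to $0\to M_0\to S/I_0\to S/I_1\to 0$ (here the sequential chain of $I_1$ has length $r-1$, with quotients $M_1,\ldots,M_{r-1}$), that because no two quotients contribute in the same cohomological degree all connecting maps vanish, so that $H^{n-n_i}_{\mathbf{m}}(S/I)\cong H^{n-n_i}_{\mathbf{m}}(M_i)$ for every $i$ while $H^j_{\mathbf{m}}(S/I)=0$ for $j\notin\{n-n_0,\ldots,n-n_{r-1}\}$. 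Finally, from the local-cohomology description $\reg(S/I)=\max_j\{\,s(H^j_{\mathbf{m}}(S/I))+j\,\}$ and the equalities $s(H^{n-n_i}_{\mathbf{m}}(M_i))+(n-n_i)=\reg(M_i)=s(J_i^{sat}/J_i)$ (the first because $M_i$ is Cohen--Macaulay) one reads off $\reg(S/I)=\max_i s(J_i^{sat}/J_i)$, which gives the claimed formula after adding $1$.

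The main obstacle is exactly this last, non-formal step. The short exact sequences alone give only the upper bound, and when several of the numbers $\reg(I_{i+1}/I_i)$ coincide (or coincide with $\reg(I_1)$) the Betti-number inequalities cannot force the maximal value to be attained. What saves the argument is the unusually rigid shape of the sequential chain — each quotient $M_i$ is Cohen--Macaulay and the quotients occupy pairwise distinct dimensions (equivalently, $S/I$ is sequentially Cohen--Macaulay) — which makes the local cohomology of $S/I$ split as the direct sum of the local cohomologies of the $M_i$, so that the contribution of every piece, in particular the largest one, genuinely survives.
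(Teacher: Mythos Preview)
The paper does not prove this proposition at all: it is quoted verbatim from \cite[Corollary 2.7]{hpv} and used as a black box, so there is no in-paper argument to compare against. That said, your proposal is correct and is essentially a self-contained reconstruction of the Herzog--Popescu--Vl\u adoiu argument: you exploit that the sequential chain exhibits $S/I$ as sequentially Cohen--Macaulay, with the successive quotients $M_i$ Cohen--Macaulay of pairwise distinct dimensions $n-n_i$, so that the local cohomology of $S/I$ decomposes and $\reg(S/I)=\max_i\reg(M_i)=\max_i s(J_i^{sat}/J_i)$. The one place to be slightly more careful in writing is the phrase ``all connecting maps vanish'': what you actually use is that in the long exact sequence for $0\to M_0\to S/I_0\to S/I_1\to 0$ the terms adjacent to each nonzero $H^{n-n_0}_{\mathbf m}(M_0)$ are zero (since $n-n_0$ is strictly smaller than every $n-n_k$ with $k\ge 1$), and likewise at higher steps; this yields the claimed isomorphisms $H^{n-n_i}_{\mathbf m}(S/I)\cong H^{n-n_i}_{\mathbf m}(M_i)$ directly, not merely via vanishing of connecting homomorphisms.
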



\begin{exm}
We consider the ideal $Q=(x_1^{a_1},\ldots,x_m^{a_m})\subset S$, where $1\leq m\leq n$ and $a_1\geq a_2 \geq \cdots \geq a_m \geq 1$. According to Proposition $1.3$, $\reg(Q)=s(\bar{S}/\bar{Q})+1$, where $\bar{S}=K[x_1,\ldots,x_m]$ and $\bar{Q}=\bar{S}\cap Q$. Since $u=x_1^{a_1-1}\cdots x_m^{a_m-1}\in \bar{S}$ is the monomial of the highest degree which is not contain in $\bar{Q}$, it follows that
\[ \reg(Q) = \sum_{i=1}^{m} (a_i-1) + 1 = a_1+\cdots+a_m - m + 1. \]
We consider the ideal $Q^q=(x_1^{qa_1},\ldots,x_m^{qa_m}, x_1^{(q-1)a_1}x_2^{a_2}, \ldots )$. Note that $Q^q\cap \bar{S} = \bar{Q}^q$ and therefore $\reg(Q^q)=s(\bar{S}/\bar{Q}^q)+1$. One can easily see that 
$u = x_1^{qa_1-1}x_2^{a_2-1}\cdots x_m^{a_m-1}$ is the monomial of the highest degree which is not contain in $\bar{Q}^q$. Thus:
\[ \reg(Q^q) = qa_1 - 1 + \sum_{i=2}^m (a_i-1) + 1 = qa_1+a_2+\cdots +a_m - m +1.\]
Note that $\reg(Q^q)\leq q\reg(Q)$, as we already know from \cite[Corollary 1.8]{mir}, and the equality holds if and only if $a_2=\cdots=a_m=1$.
\end{exm}

\section{Regularity of quasi-symbolic and bracket powers of Borel type ideals}

Now, assume $I\subset S$ is an arbitrary monomial ideal. Then $I$ has a unique irreduntant decomposition $I=\bigcap_{i=1}^s C_i$, where $C_i$ are irreducible monomial ideals. One obtains from this presentation a
\emph{canonical presentation} of $I$ as an intersection of primary ideals, $I=\bigcap_{i=1}^r Q_i$, where each $Q_i$ is $P_i$-primary and is defined to be the intersection of all $C_j$'-s with $\sqrt{C_j}=P_i$. See \cite{mono} for further details.

\begin{dfn}
Let $q$ be a positive integer. We define the \emph{$q$-th quasi-symbolic power of $I$} to be the ideal 
$$ I^{((q))}:=\bigcap_{i=1}^r Q_i^q.$$
\end{dfn}

Note that, $I^{(q)} \subset I^{((q))}$, where $I^{(q)}:= S\cap \bigcap_{P\in \Ass(S/I)}I^qS_{P}$ is the \emph{$q$-th symbolic power} of $I$. The equality holds if all $P_i$'-s are pairwise incomparable, but, in general, this is not the case. On the other hand, $I^q\subset I^{(q)}$. 

Now, assume $I\subset S$ is of Borel type with the primary decomposition $(1)$. One can easily see that $I^qS_{P_1}\cap S = I^q$, since all the minimal monomial generators of $I$ are from $K[x_1,\ldots,x_{n_0}]$ and $P_1=(x_1,\ldots,x_{n_0})$. Therefore, $I^{(q)}=I^q$.

In the following, we will assume that the primary decomposition $(1)$, of a Borel type ideal $I\subset S$, is canonical in the above sense. We have the following lemma.

\pagebreak

\begin{lema}
If $I\subset S$ is an ideal of Borel type and $q$ is a positive integer, then $Ass(S/I^{((q))})\subset Ass(S/I)$.
In particular, $I^{((q))}$ is an ideal of Borel type.
\end{lema}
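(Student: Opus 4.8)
The plan is to prove the sharper statement $\operatorname{Ass}(S/I^{(q)})\subseteq\{P_1,\dots,P_r\}$; both assertions then follow at once, since $\{P_1,\dots,P_r\}=\operatorname{Ass}(S/I)$ because the decomposition $(1)$ is irredundant, and because each $P_i=(x_1,\dots,x_{n_{i-1}})$ has exactly the shape required by condition (c) of Proposition $1.1$, so that $I^{(q)}$ is of Borel type. Thus the whole lemma reduces to controlling the associated primes of $I^{(q)}=\bigcap_{i=1}^r Q_i^q$.

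The heart of the argument is to show that each $Q_i^q$ is again $P_i$-primary. First note that $Q_i$ involves only the variables $x_1,\dots,x_{n_{i-1}}$: if a minimal monomial generator $u$ of $Q_i$ were divisible by some $x_j$ with $j>n_{i-1}$, then writing $u=x_j v$ we would have $x_j v\in Q_i$ with $x_j\notin\sqrt{Q_i}=P_i$, so primariness of $Q_i$ would force $v\in Q_i$, contradicting the minimality of $u$. Hence $Q_i=\bar Q_i S$ with $\bar Q_i:=Q_i\cap\bar S$ and $\bar S:=K[x_1,\dots,x_{n_{i-1}}]$, and therefore $Q_i^q=\bar Q_i^{\,q}S$. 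Now $\bar Q_i$ is primary to the \emph{maximal} graded ideal $\bar{\me}=(x_1,\dots,x_{n_{i-1}})\bar S$, so $\bar{\me}^{\,k}\subseteq\bar Q_i$ for some $k$, whence $\bar{\me}^{\,kq}\subseteq\bar Q_i^{\,q}\subseteq\bar{\me}$; any ideal wedged between a power of a maximal ideal and that maximal ideal is primary to it, so $\bar Q_i^{\,q}$ is $\bar{\me}$-primary, and extending scalars along $\bar S\hookrightarrow S$ shows that $Q_i^q=\bar Q_i^{\,q}S$ is $P_i$-primary. Consequently $\operatorname{Ass}(S/Q_i^q)=\{P_i\}$ for every $i$.

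Finally, from the inclusion $I^{(q)}=\bigcap_{i=1}^r Q_i^q$ one gets the embedding $S/I^{(q)}\hookrightarrow\bigoplus_{i=1}^r S/Q_i^q$, hence
\[
\operatorname{Ass}(S/I^{(q)})\subseteq\bigcup_{i=1}^r\operatorname{Ass}(S/Q_i^q)=\{P_1,\dots,P_r\}=\operatorname{Ass}(S/I),
\]
which is the first claim; applying Proposition $1.1$ yields the ``in particular''. The only step that really needs an argument is the primariness of $Q_i^q$ (the reduction to the maximal-ideal case); the rest is formal, and I do not anticipate any genuine obstacle. One could alternatively bypass the associated-primes language entirely and verify condition (b) of Proposition $1.1$ directly, using that $(Q_i^q:x_j^\infty)$ equals $S$ when $j\le n_{i-1}$ and equals $Q_i^q$ otherwise, but this relies on the same primariness fact.
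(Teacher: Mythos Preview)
Your proof is correct and follows essentially the same route as the paper: both argue that $\bigcap_{i=1}^r Q_i^q$ is a (possibly redundant) primary decomposition of $I^{(q)}$, read off $\operatorname{Ass}(S/I^{(q)})\subseteq\{P_1,\dots,P_r\}=\operatorname{Ass}(S/I)$, and invoke Proposition~1.1(c) for the Borel-type conclusion. The paper is terser---it simply notes $\sqrt{Q_i^q}=\sqrt{Q_i}$ and calls the intersection a primary decomposition---whereas you supply the justification that each $Q_i^q$ is genuinely $P_i$-primary by passing to $\bar S=K[x_1,\dots,x_{n_{i-1}}]$, where the radical becomes the maximal graded ideal.
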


\begin{proof}
Assume $I=\bigcap_{i=1}^r Q_i$ is the primary decomposition of $I$ given in $(1)$. It follows that
$I^{((q))}:=\bigcap_{i=1}^r Q_i^q$. This primary decomposition of $I^{((q))}$ is not necessarily irredundant. However,
since $\sqrt{Q_i^q} = \sqrt{Q_i}$, it follows that $Ass(S/I^{((q))})\subset Ass(S/I)$. Therefore, by Proposition $1.1(c)$, $I^{((q))}$ is an ideal of Borel type.
\end{proof}

\begin{exm}
We consider the following ideals, $Q = (x^8, x^6y^2, x^2y^6, y^8)\subset S:=K[x,y,z]$, $Q'=Q+(x^4y^4)\subset S$, and $I:=(Q,z^2)\cap Q' = (Q, x^4y^4z^2) \subset S$. Since, $Q\subsetneq Q'$, it follows that $(Q,z^2)\cap Q'$ is a primary decomposition of $I$ and thus $Ass(S/I)=\{ (x,y), (x,y,z)\}$.

We have $Q=(x^8,y^2)\cap (x^6,y^6)\cap (x^2,y^8)$ and $Q'=(x^8,y^2)\cap (x^4,y^6)\cap (x^6,y^4) \cap (x^2,y^8)$. Therefore, $I = Q' \cap (x^6,y^6,z^2)$ is the canonical primary decomposition of $I$, and thus $I^{((2))} = Q^2\cap (x^6,y^6,z^2)^2$. On the other hand,
\[ Q'^2 = Q^2 = (x^{16}, x^{14}y^2, x^{12}y^4, x^{10}y^6, x^8y^8,  x^6y^{10}, x^4y^{12}, x^2y^{14}, y^{16}), \]
and thus $I^{((2))} = Q^2$, since $Q^2 \subset (x^6,y^6,z^2)^2$. We have $s(K[x,y]/(Q'\cap K[x,y]))=8$ and $s(Q'/(Q,z^2x^4y^4)) = 11$, and therefore, by Proposition $1.3$, we get $\reg(I)=12$. Also, $s(K[x,y]/(Q\cap K[x,y])^2) = 16$ and thus $\reg(I^{((2))})=17$, according to Proposition $1.3$.
\end{exm}

Let $I\subset S$ be a Borel type ideal with the primary decomposition $I:=\bigcap_{i=1}^{r} Q_{i}$ from $(1)$.
We consider the sequential chain $I=I_0\subset I_1\subset \cdots \subset I_r=S$ of $I$, where 
$I_i:=\bigcap_{j=i+1}^{r} Q_{j}$. Note that $I_i^{((q))} := \bigcap_{j=i+1}^{r} Q_{j}^q$, since the previous primary decompositions of $I_i$-'s are canonical. We consider the following chain of ideals
\[ I^{((q))} = I_0^{((q))} \subset I_1^{((q))} \subset \cdots I_r^{((q))} = S.\]

 In the chain above, we may have some equalities.
Nevertheless, if we denote $J_{i}$ be the monomial ideal generated by $G(I_{i})$ in $S_{i}:=K[x_1,\ldots,x_{n_{i}}]$, we have
\[ I_{i+1}^{((q))} / I_{i}^{((q))} \cong ((J_{i}^{((q))})^{sat}/J_{i}^{((q))})[x_{n_i+1},\ldots,x_n]. \]

Also, the sequential chain of $I_i^{((q))}$ is obtain from the previous chain of ideal, by removing those ideals $I_i$ with $I_i=I_{i-1}$. Thus, by Proposition $1.3$,
\begin{equation}
\label{2}
\reg(I^{((q))}) = \max \{s((J_i^{((q))})^{sat}/ J_i^{((q))}),\;0\leq i\leq r-1 \}+1.
\end{equation}

Now, we are able to prove the following Theorem.

\begin{teor} 
With the above notations, we have
$\reg(I^{((q))})\leq q\cdot \reg(I)$.
\end{teor}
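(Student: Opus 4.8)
The plan is to play the regularity formula (2) for $I^{(q)}$ off against the formula of Proposition 1.3 for $I$, reducing everything to a degree estimate on each graded piece of the sequential chain. Concretely, by (2) it is enough to show, for every $0\le i\le r-1$,
\[ s\bigl((J_i^{(q)})^{sat}/J_i^{(q)}\bigr) + 1 \;\le\; q\bigl(s(J_i^{sat}/J_i)+1\bigr); \]
taking the maximum over $i$ and using Proposition 1.3 then gives $\reg(I^{(q)}) = \max_i\{s((J_i^{(q)})^{sat}/J_i^{(q)})+1\} \le q\max_i\{s(J_i^{sat}/J_i)+1\} = q\,\reg(I)$.

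To prove the per-piece inequality I would fix $i$ and write $\tilde Q_i := Q_{i+1}\cap S_i$ for the $\mathbf m_i$-primary component of $J_i$. Then $J_i^{(q)} = \tilde Q_i^{\,q}\cap (J_i^{sat})^{(q)}$ and $(J_i^{(q)})^{sat} = (J_i^{sat})^{(q)}$, so
\[ (J_i^{(q)})^{sat}/J_i^{(q)} \;\cong\; \bigl((J_i^{sat})^{(q)} + \tilde Q_i^{\,q}\bigr)/\tilde Q_i^{\,q} \;\hookrightarrow\; S_i/\tilde Q_i^{\,q}. \]
Since $S_i/\tilde Q_i^{\,q}$ is artinian this yields $s((J_i^{(q)})^{sat}/J_i^{(q)}) \le s(S_i/\tilde Q_i^{\,q}) = \reg(\tilde Q_i^{\,q}) - 1 \le q\,\reg(\tilde Q_i) - 1$, the last step being the bound $\reg(L^q)\le q\,\reg(L)$ from \cite{mir} applied to the $\mathbf m_i$-primary monomial ideal $\tilde Q_i$ (which is of Borel type). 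As $\reg(\tilde Q_i) = s(S_i/\tilde Q_i)+1$, the desired inequality follows once we know $s(S_i/\tilde Q_i) \le s(J_i^{sat}/J_i)$; the reverse inequality is free, because $J_i^{sat}/J_i\hookrightarrow S_i/\tilde Q_i$, so we would in fact get the equality $s(S_i/\tilde Q_i) = s(J_i^{sat}/J_i)$.

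The inequality $s(S_i/\tilde Q_i)\le s(J_i^{sat}/J_i)$ is the step that really needs care, and is where the Borel type hypothesis is used (it makes the sequential chain behave, so that $J_i = \tilde Q_i\cap J_i^{sat}$ with $J_i^{sat}$ the $\mathbf m_i$-saturation). I would argue with the irredundant irreducible decomposition $J_i = \bigcap_\alpha\mathfrak q_\alpha\cap\bigcap_\beta\mathfrak r_\beta$, where the $\mathfrak q_\alpha = (x_1^{a_1^\alpha},\dots,x_{n_i}^{a_{n_i}^\alpha})$ are the $\mathbf m_i$-primary components (so $\tilde Q_i=\bigcap_\alpha\mathfrak q_\alpha$ and $J_i^{sat}=\bigcap_\beta\mathfrak r_\beta$). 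Pick $\alpha_0$ with $s(S_i/\tilde Q_i)=\sum_j(a_j^{\alpha_0}-1)$ and let $c:=\prod_j x_j^{a_j^{\alpha_0}-1}$ be the corresponding corner monomial. Then $c\notin\mathfrak q_{\alpha_0}\supseteq J_i$, so $c\notin J_i$; and for each $\beta$ the non-redundancy of $\mathfrak q_{\alpha_0}$ forces $\mathfrak r_\beta\not\subseteq\mathfrak q_{\alpha_0}$, hence some pure-power generator $x_l^{b_l}$ of $\mathfrak r_\beta$ satisfies $b_l<a_l^{\alpha_0}$, so $x_l^{b_l}\mid c$ and $c\in\mathfrak r_\beta$; thus $c\in J_i^{sat}$. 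Hence $c$ gives a nonzero element of $J_i^{sat}/J_i$ in degree $s(S_i/\tilde Q_i)$, which is exactly what is needed. (Note this also says $\reg(I) = \max_i\reg(\tilde Q_i) = \max_k\reg(Q_k)$, so one could instead isolate ``$\reg(I)=\max_k\reg(Q_k)$ for the irredundant decomposition'' as a preliminary lemma and quote it; the main obstacle is precisely this lemma, i.e. that the corner of the largest $\mathbf m_i$-primary component of $J_i$ survives in $H^0_{\mathbf m_i}(S_i/J_i)$.)
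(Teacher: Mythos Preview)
Your argument is correct, but it follows a genuinely different route from the paper's proof. Both proofs reduce, via formula~(2) and Proposition~1.3, to the per-piece inequality
\[
 s\bigl((J_i^{(q)})^{sat}/J_i^{(q)}\bigr)+1 \;\le\; q\bigl(s(J_i^{sat}/J_i)+1\bigr).
\]
From there the two diverge. The paper interprets $s(J_i^{sat}/J_i)+1$ as the saturation exponent $\min\{j:\mathbf m_i^{\,j}\bar Q_k\subseteq\bar Q_{i+1}\text{ for all }k>i+1\}$ and observes the one-line implication $\mathbf m_i^{\,j}\bar Q_k\subseteq\bar Q_{i+1}\Rightarrow\mathbf m_i^{\,jq}\bar Q_k^{\,q}\subseteq\bar Q_{i+1}^{\,q}$, which gives the bound immediately and self-containedly. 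You instead embed $(J_i^{(q)})^{sat}/J_i^{(q)}\hookrightarrow S_i/\tilde Q_i^{\,q}$, invoke the result $\reg(L^q)\le q\,\reg(L)$ from \cite{mir} for the $\mathbf m_i$-primary ideal $\tilde Q_i$, and then close the loop by proving the equality $s(S_i/\tilde Q_i)=s(J_i^{sat}/J_i)$ via the irreducible decomposition and a corner-monomial argument. (Your justification of the key step---irredundancy of $\mathfrak q_{\alpha_0}$ forces $\mathfrak r_\beta\not\subseteq\mathfrak q_{\alpha_0}$, hence some generator $x_l^{b_l}$ with $b_l<a_l^{\alpha_0}$ divides $c$---is fine: if $\mathfrak r_\beta\subseteq\mathfrak q_{\alpha_0}$ then the intersection of the remaining components already sits inside $\mathfrak r_\beta\subseteq\mathfrak q_{\alpha_0}$, making $\mathfrak q_{\alpha_0}$ redundant.)

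What each approach buys: the paper's argument is shorter and does not rely on \cite{mir} or on irreducible decompositions. Your approach is longer and imports a non-trivial external input, but it yields as a by-product the identity $s(J_i^{sat}/J_i)=\reg(\tilde Q_i)-1$, hence $\reg(I)=\max_k\reg(Q_k)$ over the primary components---essentially the Bermejo--Gimenez formula (Proposition~2.7) reproved from scratch---which is a pleasant structural statement in its own right.
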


\begin{proof}
We fix $0\leq i \leq r-1$. Since $I_{i}:=\bigcap_{j=i+1}^{r} Q_{j}$, it follows that $J_{i}=\bigcap_{j=i+1}^{r} \bar{Q}_{j}$, where $\bar{Q}_{j}$ is the ideal generated by $G(Q_{j})$ in $S_{i}$. 
On the other hand, since $J_{i}^{sat}$ is generated by the elements of $G(I_{i+1})$, it follows that
$J_{i}^{sat} = \bigcap_{j=i+2}^{r} \bar{Q}_{j}$. Note that 
\[ s(J_{i}^{sat}/ J_{i}) + 1 = \min\{j:\; \mathbf{m}_{i}^j J_{i}^{sat} \subset J_{i} \} \]
and therefore
$ s(J_{i}^{sat}/ J_{i}) + 1 = \min\{j: \mathbf{m}_{i}^j \bar{Q}_{k}\subset \bar{Q}_{i+1} \;\;for\;\;all\;\;
k=i+2,\ldots,r\;\}. $
Analogously, since $I_{i}^{((q))}:=\bigcap_{j=i+1}^{r} Q_{j}^q$, it follows that
\[ s((J_{i}^{((q))})^{sat}/ J_{i}^{((q))}) + 1 = \min\{j: \mathbf{m}_{i}^j \bar{Q}_{k}^q \subset \bar{Q}_{i+1}^q \;\;for\;\;all\;\;k=i+2,\ldots,r\;\}. \]
Note that if $\mathbf{m}_{i}^j \bar{Q}_{k} \subset \bar{Q}_{i+1}$ then  
$\mathbf{m}_{i}^{jq} \bar{Q}_{k}^q = (\mathbf{m}_{i}^j \bar{Q}_{k})^q \subset \bar{Q}_{i+1}^q$. 
Therefore, we get 
\begin{equation}
\label{2}
s((J_{i}^{((q))})^{sat}/ J_{i}^{((q))}) + 1 \leq q \cdot (s(J_{i}^{sat}/ J_{i}) + 1).
\end{equation}
By applying Proposition $1.3$ to $I$ and $(3)$ we get the required conclusion.
\end{proof}

Let $I\subset S$ be a monomial ideal of Borel type. An interesting question is to find a relation between
$\reg(I^{q})$ and $\reg(I^{((q))})$.

Let $I\subset S$ be a monomial ideal and let $q$ be a nonnegative integer. We define the \emph{$q$-th bracket power} of $I$, to be the ideal $I^{[q]}$, generated by all monomials $u^q$, where $u\in I$ is a monomial. In particular, $I^{[0]}=S$ and $I^{[1]}=I$. Note that if $G(I) = \{ u_1,\ldots,u_m \}$ is the set of minimal monomial generators of $I$, then $G(I^{[q]}) = \{u_1^q,\ldots,u_m^q \}$. Note that $I^{[q]} \subset I^q$ for all $q$. In fact, when $q\geq 2$, the equality holds if and only if $I$ is principal. Also, one can easily see that $(I\cap J)^{[q]} = I^{[q]}\cap J^{[q]}$ for any monomial ideals $I,J\subset S$.

Now, assume $I=\bigcap_{i=1}^r Q_i$ is an irredundant primary decomposition of $I$. We claim that 
$I^{[q]} = \bigcap_{i=1}^r Q_i^{[q]}$ is an irredundant primary decomposition of $I^{[q]}$, where $q$ is a positive integer. In order to prove this, we fix an integer $i$ with $1\leq i\leq r$ and we chose a monomial 
$u\in Q_i \setminus \bigcap_{j\neq i} Q_j$. Obviously, $u^q\in Q_i^{[q]}$. We claim that $u^q\notin \bigcap_{j\neq i} Q_j$. Assume this is not the case. It follows that $u^q = u_j^q w_j$ for some monomials $u_j\in Q_j$ and $w_j\in S$, for all $j\neq i$. Therefore, $u_j|u$ for all $j\neq i$. It follows that $u\in \bigcap_{j\neq i} Q_j$, a contradiction.

As a consequence, we get the following Lemma.

\begin{lema}
If $I\subset S$ be a monomial ideal and $q$ a positive integer, then $Ass(S/I)=Ass(S/I^{[q]})$. 
In particular, if $I$ is of Borel type, then $I^{[q]}$ is of Borel type.
\end{lema}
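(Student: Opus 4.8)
The plan is to deduce everything from the decomposition obtained in the previous paragraph. Using the identity $(I\cap J)^{[q]}=I^{[q]}\cap J^{[q]}$, from an irredundant primary decomposition $I=\bigcap_{i=1}^r Q_i$ (which I may take with pairwise distinct radicals $P_i=\sqrt{Q_i}$, so that $Ass(S/I)=\{P_1,\ldots,P_r\}$) I get $I^{[q]}=\bigcap_{i=1}^r Q_i^{[q]}$, and the non-redundancy of this decomposition has just been checked. Hence everything comes down to showing that the $q$-th bracket power of a primary monomial ideal is again primary with the same radical: granting that each $Q_i^{[q]}$ is $P_i$-primary, $\bigcap_{i=1}^r Q_i^{[q]}$ is an irredundant primary decomposition of $I^{[q]}$, so $Ass(S/I^{[q]})=\{P_1,\ldots,P_r\}=Ass(S/I)$. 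The final assertion is then immediate from Proposition $1.1$: if $I$ is of Borel type then every $P_i$ has the form $(x_1,\ldots,x_m)$, hence so does every element of $Ass(S/I^{[q]})$, and Proposition $1.1(c)$, used in the opposite direction, gives that $I^{[q]}$ is of Borel type.

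To prove the remaining point I would fix $i$ and write $P_i=\sqrt{Q_i}=(x_{j_1},\ldots,x_{j_k})$, a monomial prime since $Q_i$ is a primary component of a monomial ideal. First I would record the classical structural fact that no minimal generator of $Q_i$ involves a variable outside $P_i$: if some $u\in G(Q_i)$ were divisible by $x_\ell\notin P_i$, then writing $u=x_\ell u'$ we would have $x_\ell u'\in Q_i$ while $u'\notin Q_i$ (otherwise $u$ would not be a minimal generator), so primariness of $Q_i$ would force $x_\ell\in\sqrt{Q_i}=P_i$, a contradiction. Consequently $Q_i=\bar Q_i S$, where $\bar Q_i$ is the monomial ideal generated by $G(Q_i)$ in $T:=K[x_{j_1},\ldots,x_{j_k}]$, and $\sqrt{\bar Q_i}$ equals the maximal ideal $\mathfrak m:=(x_{j_1},\ldots,x_{j_k})T$, since a suitable power of each $x_{j_t}$ lies in $Q_i$ and, being a monomial in the variables $x_{j_1},\ldots,x_{j_k}$ alone, already lies in $\bar Q_i$.

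Now, because $G(Q_i^{[q]})=\{u^q:\ u\in G(Q_i)\}$ again consists of monomials in $x_{j_1},\ldots,x_{j_k}$ only, I get $Q_i^{[q]}=(\bar Q_i)^{[q]}S$, where $(\bar Q_i)^{[q]}$ denotes the $q$-th bracket power of $\bar Q_i$ formed inside $T$; and $\sqrt{(\bar Q_i)^{[q]}}=\mathfrak m$, by squeezing between $\sqrt{(\bar Q_i)^{[q]}}\subseteq\sqrt{\bar Q_i}=\mathfrak m$ (since $(\bar Q_i)^{[q]}\subseteq\bar Q_i$) and the reverse inclusion (a power of each $x_{j_t}$ lies in $\bar Q_i$, hence its $q$-th power lies in $(\bar Q_i)^{[q]}$). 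An ideal whose radical is maximal is primary, so $(\bar Q_i)^{[q]}$ is $\mathfrak m$-primary in $T$, and therefore $Q_i^{[q]}=(\bar Q_i)^{[q]}S$ is $P_i$-primary in $S$ with $\sqrt{Q_i^{[q]}}=P_i$, which is what was needed. I expect the only point that is not entirely formal to be this structural description of primary monomial ideals; if one prefers to avoid it, the same conclusion follows from the flat $K$-algebra endomorphism $\varphi\colon S\to S$, $x_i\mapsto x_i^q$: then $I^{[q]}=\varphi(I)S$ is the extension of the isomorphic copy $\varphi(I)\subset\varphi(S)$ of $I$, each prime $(x_1,\ldots,x_m)$ extends to the $(x_1,\ldots,x_m)$-primary ideal $(x_1^q,\ldots,x_m^q)$, and faithfully flat base change identifies $Ass(S/I^{[q]})$ with $Ass(S/I)$.
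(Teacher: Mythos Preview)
Your proof is correct and follows essentially the same approach as the paper: the paper's argument is the paragraph preceding the lemma, which establishes that $I^{[q]}=\bigcap_{i=1}^r Q_i^{[q]}$ is the irredundant primary decomposition of $I^{[q]}$, and you build on exactly this. The main difference is that you supply a careful verification that each $Q_i^{[q]}$ is $P_i$-primary (via the structural description of primary monomial ideals, or alternatively via the flat Frobenius-like endomorphism), a point the paper asserts but does not argue; this fills a small gap rather than constituting a different route.
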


Now, we are able to prove the following Theorem.

\begin{teor}
Let $I\subset S$ be a monomial ideal of Borel type. Then: $$\reg(I^{[q]})\geq q\cdot \reg(I).$$
\end{teor}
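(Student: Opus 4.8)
The plan is to apply the regularity formula of Proposition 1.3 to both $I$ and $I^{[q]}$, and then to inflate a single monomial witness of degree $\reg(I)-1$ for $I$ into a monomial witness of degree $q\reg(I)-1$ for $I^{[q]}$. First I would pin down the combinatorics of $I^{[q]}$. By Lemma 2.4 and the discussion preceding it, $I^{[q]}$ is again of Borel type, $Ass(S/I^{[q]})=Ass(S/I)$, and $I^{[q]}=\bigcap_{i=1}^r Q_i^{[q]}$ is its irredundant primary decomposition; using $(A\cap B)^{[q]}=A^{[q]}\cap B^{[q]}$ repeatedly, its sequential chain is $I^{[q]}=I_0^{[q]}\subset I_1^{[q]}\subset\cdots\subset I_r^{[q]}=S$, with $I_i^{[q]}=(I_i)^{[q]}$ and with the same integers $n_i$ (a variable divides $u$ iff it divides $u^q$). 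Hence the ideal generated by $G(I_i^{[q]})=\{u^q:u\in G(I_i)\}$ in $S_i$ is precisely $J_i^{[q]}$, and Proposition 1.3 gives
\[ \reg(I^{[q]})=\max_{0\le i\le r-1} s\big((J_i^{[q]})^{sat}/J_i^{[q]}\big)+1. \]

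Next, fix $i_0$ with $\reg(I)=s(J_{i_0}^{sat}/J_{i_0})+1$, put $J:=J_{i_0}$ and $d:=s(J^{sat}/J)$, and choose a monomial $w\in J^{sat}\setminus J$ with $\deg w=d$. I would then establish two purely monomial facts. (i) For any monomial $v\in J^{sat}$ one has $v^q\in(J^{[q]})^{sat}$: indeed $x_k^N v\in J$ for all $k$ and $N\gg 0$, so writing $x_k^N v=u\cdot s$ with $u\in G(J)$ gives $x_k^{Nq}v^q=u^q s^q\in J^{[q]}$, whence $v^q\in(J^{[q]}:\mathbf{m}_{i_0}^\infty)=(J^{[q]})^{sat}$. (ii) For any monomial $z$ with $\deg z\le q-1$ one has $w^q z\notin J^{[q]}$: if $w^q z=u^q t$ with $u\in G(J)$, then comparing, for each variable $x_k$, the exponent of $x_k$ on both sides yields $q\deg_{x_k}(u)\le q\deg_{x_k}(w)+\deg_{x_k}(z)\le q\deg_{x_k}(w)+q-1$, which forces $\deg_{x_k}(u)\le\deg_{x_k}(w)$; hence $u\mid w$ and $w\in J$, a contradiction.

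Combining these, the monomial $w^q x_1^{q-1}$ has degree $qd+q-1$, lies in $(J^{[q]})^{sat}$ by (i) together with the fact that $(J^{[q]})^{sat}$ is an ideal, and does not lie in $J^{[q]}$ by (ii). Therefore $s\big((J_{i_0}^{[q]})^{sat}/J_{i_0}^{[q]}\big)\ge qd+q-1$, and the two displayed formulas give $\reg(I^{[q]})\ge qd+q=q(d+1)=q\reg(I)$, as desired. (As in Proposition 1.3, each $(J_i^{[q]})^{sat}/J_i^{[q]}$ is artinian so the quantities $s(\cdot)$ are finite, and the chain of the $I_i^{[q]}$ is strictly increasing by the irredundancy noted before Lemma 2.4, so Proposition 1.3 applies verbatim.)

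I expect the only genuinely delicate point to be the bookkeeping in the first paragraph: checking that the sequential chain, the ambient rings $S_i$ (equivalently, the integers $n_i$), and hence the ideals $J_i$ all transform termwise under the bracket operation, so that the regularity formula of Proposition 1.3 can be invoked for $I^{[q]}$ exactly as stated. Once that is in place, the heart of the argument — the exponent comparison in (i) and (ii) that lifts $w$ to $w^q x_1^{q-1}$ — is routine.
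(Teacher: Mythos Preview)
Your proof is correct and follows essentially the same route as the paper: both set up the sequential chain of $I^{[q]}$ termwise, pick a monomial $u\in J_i^{sat}\setminus J_i$, and show that $x_1^{q-1}u^q\in (J_i^{[q]})^{sat}\setminus J_i^{[q]}$ via the divisibility argument $v^q\mid x_1^{q-1}u^q\Rightarrow v\mid u$. The only differences are cosmetic: you fix one index $i_0$ realizing $\reg(I)$ while the paper proves the inequality $s((J_i^{[q]})^{sat}/J_i^{[q]})\ge q\,s(J_i^{sat}/J_i)+q-1$ for every $i$, and you spell out points (i) and (ii) that the paper leaves as one-line claims.
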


\begin{proof}
We consider the primary irredundant decomposition $\bigcap_{i=1}^rQ_i$ of $I$ from $(1)$ and the sequential chain $I = I_0 \subset I_1 \subset \cdots \subset I_r := S$ of $I$, where
$I_{i} = \bigcap_{j=i+1}^{r} Q_{j}$, for $0\leq i\leq r-1$. Note that the sequential chain of $I^{[q]}$, is 
$I^{[q]} = I_0^{[q]} \subset I_1^{[q]} \subset \cdots \subset I_r^{[q]} = S$. Indeed, all the inclusions are stricts.

We fix an integer $0\leq i \leq r-1$. Let $J_{i}$ be the monomial ideal generated by $G(I_{i})$ in $S_{i}:=K[x_1,\ldots,x_{n_{i}}]$. We denote $\bar{Q}_j$, the ideal generated by $G(Q_{j})$ in $S_{i}$, for all $1\leq j\leq r$. With these notations, we have $J_{i}=\bigcap_{j=i+1}^{r} \bar{Q}_{j}$ and 
$J_{i}^{[q]} = \bigcap_{j=i+1}^{r} \bar{Q}_{j}^{[q]}$. On the other hand, since $J_{i}^{sat}$ is generated by the elements of $G(I_{i+1})$, it follows that $J_{i}^{sat} = \bigcap_{j=i+2}^{r} \bar{Q}_{j}$.

Let $u\in J_{i}^{sat}\setminus J_{i}$ be a nonzero monomial. We claim that 
$x_1^{q-1}u^q \in (J_{i}^{[q]})^{sat}\setminus J_{i}^{[q]}$. It is clear that $x_1^{q-1}u^q \in (J_{i}^{[q]})^{sat}$. If we assume that $x_1^{q-1}u^q\in J_{i}^{[q]}$, it follows that $x_1^{q-1}u^q = v^q\cdot w$, where $v\in J_{i}$ is a monomial and $w\in S$ is a monomial. Since $v^q|x_1^{q-1}u^q$, it follows that $v|u$ and therefore $u\in J_{i}$, a contradiction.

As a consequence, we get $s((J_{i}^{[q]})^{sat} / J_{i}^{[q]}) \geq q\cdot s(J_{i}^{sat}/J_{i})+q-1$.
By applying Proposition $1.3$, we get the required conclusion.
\end{proof}

\begin{obs}
The conclusions of Theorem $2.4$ and Theorem $2.6$ hold for monomial ideals $I\subset S$ with $Ass(S/I)$ totally ordered by inclusion. Indeed, if $I$ is such an ideal, we can define a ring isomorphism $\varphi:S\rightarrow S$ given by a reordering of variables, such that $\varphi(I)$ is an ideal of Borel type. Since the Castelnuovo-Mumford regularity is an invariant, it follows that $reg(I)=reg(\varphi(I))$.
\end{obs}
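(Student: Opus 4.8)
The plan is to reduce the claim to the two theorems already proved, by renaming the variables so that a module with totally ordered associated primes becomes one of Borel type. Suppose $Ass(S/I)$ is totally ordered by inclusion. Since $I$ is a monomial ideal, each $P\in Ass(S/I)$ is generated by a subset of the variables, say $P=(x_j:\,j\in A_P)$ with $A_P\subseteq\{1,\ldots,n\}$, and the total ordering of $Ass(S/I)$ by inclusion translates into a chain of supports $A_1\subsetneq A_2\subsetneq\cdots\subsetneq A_s$. I would then choose a permutation $\sigma$ of $\{1,\ldots,n\}$ that ``straightens'' this chain, i.e. such that $\sigma(A_k)=\{1,2,\ldots,|A_k|\}$ for every $k$; such a $\sigma$ exists because one may list first the elements of $A_1$, then those of $A_2\setminus A_1$, and so on, and finally the elements of $\{1,\ldots,n\}\setminus A_s$. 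Let $\varphi\colon S\to S$ be the degree-preserving $K$-algebra automorphism determined by $x_j\mapsto x_{\sigma(j)}$.

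Next I would record the formal properties of $\varphi$ that the argument uses. First, because $\varphi$ permutes the variables it is a graded isomorphism of degree $0$, hence $\beta_{ij}(\varphi(J))=\beta_{ij}(J)$ for every graded ideal $J$, so $\reg(\varphi(J))=\reg(J)$. Second, $\varphi$ being a ring automorphism commutes with forming intersections and preserves (irredundant) primary decompositions, sending $I=\bigcap_{i=1}^rQ_i$ to $\varphi(I)=\bigcap_{i=1}^r\varphi(Q_i)$ with $\varphi(Q_i)$ being $\varphi(P_i)$-primary; consequently
\[
\varphi\bigl(I^{(q)}\bigr)=\varphi\Bigl(\bigcap_{i=1}^rQ_i^q\Bigr)=\bigcap_{i=1}^r\varphi(Q_i)^q=\varphi(I)^{(q)}.
\]
Third, $\varphi$ maps the minimal monomial generators of a monomial ideal to the minimal monomial generators of its image, and $\varphi(u)^q=\varphi(u^q)$ for a monomial $u$; combining this with $G(I^{[q]})=\{u^q:\,u\in G(I)\}$ gives $\varphi\bigl(I^{[q]}\bigr)=\varphi(I)^{[q]}$.

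Finally I would put the pieces together. By the choice of $\sigma$ we have $\varphi(P_k)=(x_1,\ldots,x_{|A_k|})$ for each $k$, so every associated prime of $\varphi(I)$ has the form $(x_1,\ldots,x_m)$ and Proposition $1.1$ shows that $\varphi(I)$ is of Borel type. Therefore
\[
\reg\bigl(I^{(q)}\bigr)=\reg\bigl(\varphi(I^{(q)})\bigr)=\reg\bigl(\varphi(I)^{(q)}\bigr)\le q\,\reg(\varphi(I))=q\,\reg(I),
\]
the inequality being Theorem $2.3$ applied to the Borel type ideal $\varphi(I)$, and similarly
\[
\reg\bigl(I^{[q]}\bigr)=\reg\bigl(\varphi(I)^{[q]}\bigr)\ge q\,\reg(\varphi(I))=q\,\reg(I)
\]
by Theorem $2.5$.

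I do not expect a genuine obstacle here: the whole content of the statement is the bookkeeping just described. The two points that deserve a moment's care are the existence of the straightening permutation $\sigma$ (routine, once one notes that the associated primes of a monomial ideal are monomial primes) and the verification that $\varphi$ commutes with the constructions $I\mapsto I^{(q)}$ and $I\mapsto I^{[q]}$, since it is precisely this commutation that transports the inequalities of Theorems $2.3$ and $2.5$ from $\varphi(I)$ back to $I$.
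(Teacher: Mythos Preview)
Your proposal is correct and follows exactly the approach sketched in the paper's remark: reorder the variables via a permutation $\sigma$ so that the chain of associated primes becomes $\{(x_1,\ldots,x_m)\}$, note that $\varphi(I)$ is then of Borel type, and transport the inequalities of Theorems~2.3 and~2.5 back to $I$ using that regularity is invariant under graded automorphisms. You have simply supplied the details the paper leaves implicit---in particular the explicit construction of $\sigma$ and the verification that $\varphi$ commutes with $I\mapsto I^{(q)}$ and $I\mapsto I^{[q]}$---but the strategy is identical.
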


Bermejo and Giemenez give in \cite{BG} a formula for the regularity of a Borel type ideal $I$, when the irredundant irreducible decomposition is known. More precisely, they proved the following Proposition.

\begin{prop}\cite[Corollary 3.17]{BG}
Let $I\subset S$ be a monomial ideal of Borel type. Assume $I=\bigcap_{i=1}^m C_i$ is the irredundant irreducible decomposition of $I$. Then:
\[ \reg(I) = \max \{reg(C_i):\; i=1,\ldots,m\}. \]
\end{prop}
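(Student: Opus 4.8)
The plan is to reduce both sides of the claimed identity to a comparison of maximal degrees of monomials, using Proposition $1.3$ and the regularity computation of an irreducible monomial ideal from Example $1.4$. Write the primary decomposition $(1)$ of $I$ as $I=\bigcap_{i=1}^{r}Q_{i}$ with $\sqrt{Q_{i}}=P_{i}=(x_{1},\ldots,x_{n_{i-1}})$ and $n_{0}>\cdots>n_{r-1}$, and group the irreducible components of $I$ according to their radicals: $Q_{i}=\bigcap_{s}C_{i,s}$ with $C_{i,s}=(x_{1}^{b_{1}},\ldots,x_{n_{i-1}}^{b_{n_{i-1}}})$, all exponents positive. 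Then $\{C_{i,s}\}$ is the irredundant irreducible decomposition of $I$, and by Example $1.4$ we have $\reg(C_{i,s})-1=\sum_{\ell=1}^{n_{i-1}}(b_{\ell}-1)=\deg w_{i,s}$, where $w_{i,s}:=\prod_{\ell=1}^{n_{i-1}}x_{\ell}^{\,b_{\ell}-1}$ is the monomial of largest degree not belonging to $C_{i,s}$ (inside $K[x_{1},\ldots,x_{n_{i-1}}]$). On the other hand, with the sequential chain $I=I_{0}\subset\cdots\subset I_{r}=S$ and the ideals $J_{\ell}\subset S_{\ell}=K[x_{1},\ldots,x_{n_{\ell}}]$ as in the preamble to Proposition $1.3$, one has $J_{\ell}=\bigcap_{j\geq\ell+1}\bar{Q}_{j}$ and $J_{\ell}^{sat}=\bigcap_{j\geq\ell+2}\bar{Q}_{j}$ (bars meaning extension to $S_{\ell}$); since $J_{\ell}^{sat}/J_{\ell}$ is artinian, $s(J_{\ell}^{sat}/J_{\ell})$ equals the largest degree of a monomial lying in $J_{\ell}^{sat}$ but not in $J_{\ell}$. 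So everything comes down to comparing these maximal degrees through Proposition $1.3$.

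First I would prove $\reg(I)\leq\max_{i}\reg(C_{i})$. Fix $\ell$ and a monomial $m\in J_{\ell}^{sat}\setminus J_{\ell}$. Since $J_{\ell}=\bar{Q}_{\ell+1}\cap J_{\ell}^{sat}$ and $m\in J_{\ell}^{sat}$, we must have $m\notin\bar{Q}_{\ell+1}$, hence $m\notin C_{\ell+1,t}$ for some irreducible component $C_{\ell+1,t}$ of $Q_{\ell+1}$; this forces the $x_{\nu}$-exponent of $m$ to be strictly below the corresponding exponent of $C_{\ell+1,t}$ for every $\nu\leq n_{\ell}$, i.e. $m$ divides $w_{\ell+1,t}$. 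Therefore $\deg m\leq\deg w_{\ell+1,t}=\reg(C_{\ell+1,t})-1$, and taking the maximum over $\ell$ and $m$ gives $\reg(I)=\max_{\ell}\bigl(s(J_{\ell}^{sat}/J_{\ell})+1\bigr)\leq\max_{i}\reg(C_{i})$.

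For the reverse inequality it is enough to show, for each $C_{i,s}$, that $w_{i,s}\in J_{i-1}^{sat}\setminus J_{i-1}$; then Proposition $1.3$ gives $\reg(I)\geq s(J_{i-1}^{sat}/J_{i-1})+1\geq\deg w_{i,s}+1=\reg(C_{i,s})$. The non-membership $w_{i,s}\notin J_{i-1}$ is clear, because $J_{i-1}=\bigcap_{j\geq i}\bar{Q}_{j}\subset\bar{Q}_{i}\subset C_{i,s}$ while $w_{i,s}\notin C_{i,s}$ by construction. The membership $w_{i,s}\in J_{i-1}^{sat}=\bigcap_{k\geq i+1}\bar{Q}_{k}$ is where the real work lies, and I expect it to be the main obstacle: because each $Q_{k}$ with $k>i$ only involves the variables $x_{1},\ldots,x_{n_{k-1}}$ with $n_{k-1}<n_{i-1}$, it suffices to prove that $\prod_{\ell=1}^{n_{k-1}}x_{\ell}^{\,b_{\ell}-1}$ belongs to $Q_{k}=\bigcap_{t}C_{k,t}$, hence to each $C_{k,t}$. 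Here I would invoke the irredundancy of the irreducible decomposition of $I$: it implies $C_{k,t}\not\subseteq C_{i,s}$, and since the radicals $P_{k}\subset P_{i}$ are nested, comparing generator exponents of these two irreducible ideals produces an index $\ell\leq n_{k-1}$ at which the exponent of $C_{i,s}$ strictly exceeds that of $C_{k,t}$ --- exactly what is needed to conclude $\prod_{\ell=1}^{n_{k-1}}x_{\ell}^{\,b_{\ell}-1}\in C_{k,t}$.

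Combining the two inequalities yields $\reg(I)=\max_{i}\reg(C_{i})$. The auxiliary points --- that grouping the irreducible components of a monomial ideal by radical recovers a primary decomposition, and that $w_{i,s}$ is the precise ``socle monomial'' so that divisibility by it is equivalent to non-membership in $C_{i,s}$ --- are routine; the genuine content is the irredundancy argument that places $w_{i,s}$ inside $J_{i-1}^{sat}$.
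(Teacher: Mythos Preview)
The paper does not prove this proposition at all: it is quoted verbatim from \cite[Corollary~3.17]{BG} and used as a black box, so there is no ``paper's own proof'' to compare against. Your argument is therefore an independent, self-contained proof using only the tools already developed in the paper (Proposition~1.3, the sequential chain, and the socle computation for an irreducible monomial ideal), which is more than the paper itself offers.

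Your proof is correct. The inequality $\reg(I)\leq\max_i\reg(C_i)$ is straightforward as you wrote it. For the reverse inequality, the crux is indeed showing $w_{i,s}\in J_{i-1}^{sat}=\bigcap_{k\geq i+1}\bar Q_k$, and your irredundancy argument works: if $C_{k,t}\subseteq C_{i,s}$ for some $(k,t)\neq(i,s)$, then $C_{i,s}$ could be dropped from the irreducible decomposition, contradicting irredundancy; hence there is an index $\ell\leq n_{k-1}$ with $c_\ell<b_\ell$, giving $x_\ell^{c_\ell}\mid x_\ell^{\,b_\ell-1}\mid w_{i,s}$, so $w_{i,s}\in C_{k,t}$ for every $t$, and thus $w_{i,s}\in\bar Q_k$.

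Two minor corrections: the regularity computation for $(x_1^{a_1},\ldots,x_m^{a_m})$ is Example~1.5 in the paper, not~1.4; and you should state explicitly (it is implicit in your phrasing) that ``$m\notin C_{\ell+1,t}$'' is equivalent to ``$m\mid w_{\ell+1,t}$'' precisely because an irreducible monomial ideal $(x_1^{b_1},\ldots,x_p^{b_p})$ fails to contain a monomial in $K[x_1,\ldots,x_p]$ if and only if every exponent is strictly below the corresponding $b_\nu$.
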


Since $C_i$-'s are irreducible monomial ideals, they are generated by powers of variables. Since $\sqrt{C_i}\in Ass(S/I)$ and $I$ is of Borel type, we may assume that $C_i=(x_1^{a_{i1}}, \ldots, x_{r_i}^{a_{ir_i}})$, where $r_i$ is an integer with $1\leq r_i \leq n$ and $a_{ij}$ are some positive integers. Denote $S_i:=K[x_1,\ldots,x_{r_i}]$. If we denote $\bar{C}_i$ the ideal generated by $G(C_i)$ in $S_i$, then, by Proposition $1.3$, as in Example $1.5$, we have
$\reg(C_i) = s(S_i/\bar{C}_i)+1 = a_{i1} + \cdots + a_{ir_i} - r_i + 1$.
Therefore, we get the following corollary.

\begin{cor}
With the notations above, $$\reg(I)=\max\{a_{i1} + \cdots + a_{ir_i} - r_i + 1:\;i=1,\ldots,m\}.$$
\end{cor}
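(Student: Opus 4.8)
The proof is essentially the computation already sketched before the statement, made precise, and the plan is to combine Proposition $2.7$ with the regularity formula for a single irreducible monomial ideal, exactly as in Example $1.5$. By Proposition $2.7$ we have $\reg(I)=\max\{\reg(C_i):\;i=1,\ldots,m\}$, so it suffices to prove $\reg(C_i)=a_{i1}+\cdots+a_{ir_i}-r_i+1$ for each $i$ and then take the maximum.

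First I would justify the normal form $C_i=(x_1^{a_{i1}},\ldots,x_{r_i}^{a_{ir_i}})$. An irreducible monomial ideal is generated by pure powers of variables, so $C_i$ is generated by powers of some subset of the variables, and if $P=\sqrt{C_i}$ denotes the prime generated by exactly those variables, then $P\in Ass(S/I)$ because $I=\bigcap_{i=1}^m C_i$ is the irredundant irreducible decomposition. Since $I$ is of Borel type, Proposition $1.1(c)$ forces $P=(x_1,\ldots,x_{r_i})$ for some $1\le r_i\le n$, hence the variables occurring in $G(C_i)$ are precisely $x_1,\ldots,x_{r_i}$, which gives the claimed form.

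Finally I would compute $\reg(C_i)$. The ideal $C_i$ is of Borel type with sequential chain of length one, since $\sqrt{C_i}=(x_1,\ldots,x_{r_i})$ is its unique associated prime; so with $S_i=K[x_1,\ldots,x_{r_i}]$ and $\bar C_i$ the ideal generated by $G(C_i)$ in $S_i$, Proposition $1.3$ yields $\reg(C_i)=s(S_i/\bar C_i)+1$. The artinian ring $S_i/\bar C_i$ has as a $K$-basis the monomials divisible by no $x_j^{a_{ij}}$, and the unique one of maximal degree is $x_1^{a_{i1}-1}\cdots x_{r_i}^{a_{ir_i}-1}$, so $s(S_i/\bar C_i)=\sum_{j=1}^{r_i}(a_{ij}-1)=a_{i1}+\cdots+a_{ir_i}-r_i$, whence $\reg(C_i)=a_{i1}+\cdots+a_{ir_i}-r_i+1$. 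Substituting into Proposition $2.7$ gives the corollary. There is no genuine obstacle: the only step beyond the computation already done in Example $1.5$ is the appeal to Proposition $1.1(c)$ to put each component $C_i$ in the stated form, and that is immediate from $\sqrt{C_i}\in Ass(S/I)$.
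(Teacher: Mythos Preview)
Your proof is correct and follows exactly the paper's approach: the paper derives the corollary from Proposition~$2.7$ by noting that $\sqrt{C_i}\in Ass(S/I)$ forces $C_i=(x_1^{a_{i1}},\ldots,x_{r_i}^{a_{ir_i}})$ via the Borel-type condition, and then computes $\reg(C_i)=s(S_i/\bar C_i)+1$ using Proposition~$1.3$ as in Example~$1.5$. Your write-up is in fact slightly more detailed than the paper's, which treats the computation as already done in the paragraph preceding the corollary.
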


Let $q$ be a positive integer and consider the ideal $I^{[q]}$. Since $I=\bigcap_{i=1}^m C_i$, it follows that
$I^{[q]} = \bigcap_{i=1}^m C_i^{[q]}$ and $C_i^{[q]}= (x_1^{qa_{i1}}, \ldots, x_{r_i}^{qa_{ir_i}})$. Note that
$\bigcap_{i=1}^m C_i^{[q]}$ is the irredundant irreducible decomposition of $I^{[q]}$. Indeed, we can argue in the same way as we did for the irreducible primary decomposition of $I^{[q]}$. Therefore, by Corollary $2.9$, we get the following.

\begin{cor}
$reg(I^{[q]}) = \max\{ qa_{i1} + \cdots + qa_{ir_i} - r_i + 1:\; i=1,\ldots,m\}$.
\end{cor}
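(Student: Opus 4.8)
The plan is to reduce everything to Corollary~$2.8$ applied to the ideal $I^{[q]}$ itself. First I would invoke Lemma~$2.4$ to record that $I^{[q]}$ is again of Borel type; this is what makes Corollary~$2.8$ (equivalently, Proposition~$2.7$) applicable to $I^{[q]}$, once an irredundant irreducible decomposition of $I^{[q]}$ is in hand.

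Next I would check that $I^{[q]} = \bigcap_{i=1}^m C_i^{[q]}$ is exactly the irredundant irreducible decomposition of $I^{[q]}$. The elementary identity $(J\cap J')^{[q]} = J^{[q]}\cap J'^{[q]}$ for monomial ideals, recalled earlier, gives $I^{[q]} = \bigcap_{i=1}^m C_i^{[q]}$ immediately, and each $C_i^{[q]} = (x_1^{qa_{i1}},\ldots,x_{r_i}^{qa_{ir_i}})$ is still irreducible, being generated by pure powers of a subset of the variables. Irredundancy is inherited from that of $\bigcap_{i=1}^m C_i$ by the same divisibility argument already used in the excerpt for the primary decomposition of $I^{[q]}$: if $C_k^{[q]}$ were superfluous, choose a monomial $u \in C_k \setminus \bigcap_{j\neq k} C_j$; then $u^q \in C_k^{[q]}$, and if $u^q \in \bigcap_{j\neq k} C_j^{[q]}$ one would get $u_j^q \mid u^q$ with $u_j \in C_j$, hence $u_j \mid u$ for all $j \neq k$, forcing $u \in \bigcap_{j\neq k} C_j$, a contradiction.

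Finally I would apply Corollary~$2.8$ to $I^{[q]}$: its irreducible components $C_i^{[q]}$ involve the same $r_i$ variables $x_1,\ldots,x_{r_i}$ as $C_i$, but with exponents $qa_{i1},\ldots,qa_{ir_i}$. The formula of Corollary~$2.8$ then reads
\[ \reg(I^{[q]}) = \max\{ qa_{i1} + \cdots + qa_{ir_i} - r_i + 1 :\ i = 1,\ldots,m\}, \]
which is precisely the assertion. I do not expect any genuine obstacle here; the only point deserving a line of justification is the irredundancy of $\bigcap_{i=1}^m C_i^{[q]}$, and that is the short divisibility argument indicated above, entirely parallel to the one already carried out in the text. (An alternative, slightly longer route would bypass Corollary~$2.8$ and argue directly from Proposition~$1.3$ via the sequential chain of $I^{[q]}$, computing $s((J_i^{[q]})^{sat}/J_i^{[q]})$ for each $i$; but going through the irreducible decomposition is cleaner.)
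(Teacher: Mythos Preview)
Your proposal is correct and follows essentially the same route as the paper: both obtain $I^{[q]}=\bigcap_{i=1}^m C_i^{[q]}$ from the identity $(J\cap J')^{[q]}=J^{[q]}\cap J'^{[q]}$, argue irredundancy via the same divisibility trick used earlier for the primary decomposition of $I^{[q]}$, and then feed the resulting irreducible decomposition into Corollary~2.8. Your explicit invocation of Lemma~2.4 to ensure $I^{[q]}$ is of Borel type is a reasonable clarification that the paper leaves implicit.
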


The above formula leads us to the following upper bound for $\reg(I^{[q]})$.

\begin{prop}
Let $I\subset S$ be an ideal of Borel type and let $q$ be a positive integer. Then:
\[ \reg(I^{[q]}) \leq q\reg(I) + (q-1)(n-1) = \alpha q \reg(I) - (n-1), \]
where $\alpha=1+\frac{n-1}{reg(I)}$.
\end{prop}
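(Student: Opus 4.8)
The plan is to reduce everything to the explicit formulas in Corollaries $2.8$ and $2.9$ and then compare the two maxima term by term. Write $I = \bigcap_{i=1}^m C_i$ for the irredundant irreducible decomposition, with $C_i = (x_1^{a_{i1}}, \ldots, x_{r_i}^{a_{ir_i}})$ as fixed in the paragraph preceding Corollary $2.8$, and set $\sigma_i := a_{i1} + \cdots + a_{ir_i}$. By Corollary $2.8$ we have $\reg(I) = \max_i (\sigma_i - r_i + 1)$, and by Corollary $2.9$ we have $\reg(I^{[q]}) = \max_i (q\sigma_i - r_i + 1)$. So it suffices to bound $q\sigma_i - r_i + 1$ for a single index $i$ and then take the maximum.

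Fix $i$. From $\sigma_i - r_i + 1 \leq \reg(I)$ we get $q\sigma_i \leq q\reg(I) + q r_i - q$, hence
\[ q\sigma_i - r_i + 1 \leq q\reg(I) + q r_i - q - r_i + 1 = q\reg(I) + (q-1)(r_i - 1). \]
Since $r_i \leq n$, the right-hand side is at most $q\reg(I) + (q-1)(n-1)$. Taking the maximum over $i = 1,\ldots,m$ and invoking Corollary $2.9$ yields $\reg(I^{[q]}) \leq q\reg(I) + (q-1)(n-1)$, which is the claimed inequality. Finally, the alternative expression follows by a one-line rewriting: with $\alpha = 1 + \frac{n-1}{\reg(I)}$ one has $\alpha q \reg(I) = q\reg(I) + q(n-1)$, so $\alpha q \reg(I) - (n-1) = q\reg(I) + (q-1)(n-1)$.

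There is no serious obstacle here: the only thing worth double-checking is that Corollary $2.9$ indeed applies, i.e.\ that $\bigcap_{i=1}^m C_i^{[q]}$ with $C_i^{[q]} = (x_1^{qa_{i1}}, \ldots, x_{r_i}^{qa_{ir_i}})$ really is the irredundant irreducible decomposition of $I^{[q]}$ — but this was already argued in the text (by the same reasoning used for the irredundant primary decomposition of $I^{[q]}$, relying on $(I\cap J)^{[q]} = I^{[q]} \cap J^{[q]}$ and the divisibility argument in Lemma $2.4$). One may also remark, as a sanity check, that the bound is sharp in the direction it needs to be: equality in the intermediate step $(q-1)(r_i-1) \leq (q-1)(n-1)$ forces an associated prime equal to $\mathbf m$, consistent with Example $1.5$.
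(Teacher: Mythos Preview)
Your proof is correct and follows the same route as the paper's: reduce to the formulas of Corollaries $2.8$ and $2.9$, rewrite $q\sigma_i - r_i + 1 = q(\sigma_i - r_i + 1) + (q-1)(r_i-1)$, and use $r_i \leq n$. If anything, your termwise argument is slightly cleaner than the paper's, which fixes a single index $i$ realizing $\reg(I)$ and tacitly treats it as also realizing $\reg(I^{[q]})$; your version sidesteps that by bounding every term before taking the maximum.
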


\begin{proof}
With the above notations, we may assume $\reg(I)=a_{i1} + \cdots + a_{ir_i} - r_i + 1$ for some $1\leq i\leq m$.
According to Corollary $2.9$ and Corollary $2.10$, $\reg(I^{[q]}) = qa_{i1} + \cdots + qa_{ii_r} - r_i + 1$. Therefore,
$\reg(I^{[q]}) = q\reg(I) + (q-1)(r_i-1)$. Since $r_i-1\leq n-1$, we get the required inequality. The remaining equality is trivial.
\end{proof}

We conclude our paper, with the following example.

\begin{exm}
Let $I=(x)\cap(x^2,y)=(x^2,xy)\subset S=K[x,y]$. Let $q$ be a positive integer. It follows that
$I^q = (x^{2q}, x^{2q-1}y, \ldots, x^qy^q) = (x^q)\cap(x^{2q},x^{2q-1}y,\ldots,x^{q+1}y^{q-1},y^q)$.

Also, we obtain $I^{((q))}=(x^q)\cap(x^2,y)^q = (x^q)\cap(x^{2q},x^{2q-2}y,\ldots,x^2y^{q-1},y^q) = \linebreak
(x^{2q}, x^{2q-2}y, \ldots ,x^{2q-2\left\lfloor \frac{q}{2} \right\rfloor}y^{\left\lfloor \frac{q}{2} \right\rfloor} , x^{q}y^{\left\lfloor \frac{q}{2} \right\rfloor+1} )$, where we denoted by $\left\lfloor \alpha \right\rfloor$ the integer part of $\alpha$. On the other hand, $I^{[q]}=(x^q)\cap(x^{2q},y^q)=(x^{2q},x^qy^q)$.

We consider the sequential chain of $I$, $I=:I_0\subset I_1 \subset I_2:=S$, where $I_1=(x)$. We have $J_0=I\subset S$ and $J_1=(x)\subset K[x]$. Therefore, $J_0^q=I^q$, $J_0^{((q))}=I^{((q))}$ and $J_0^{[q]}=I^{[q]}$. Also, $J_1^q=J_1^{((q))}=J_1^{[q]}=(x^q)\subset K[x]$. We get $J_0^{sat}=(x)S$, $(J_0^q)^{sat}=(J_0^{((q))})^{sat}=(J_0^{[q]})^{sat}=(x^q)S$ and
 $J_1^{sat} = (J_1^q)^{sat} = (J_1^{((q))})^{sat} = (J_1^{[q]})^{sat} = K[x]$.
 
We have $s(J_1^{sat}/J_1) = 0$ and $s((J_1^q)^{sat}/J_1^q) = s((J_1^{((q))})^{sat}/J_1^{((q))})= s((J_1^{[q]})^{sat}/J_1^{[q]}) = q-1$.

Also, one can easily compute $s(J_0^{sat}/J_0) = 1$, $s((J_0^q)^{sat}/J_0^q) = 2q-1$, $s((J_0^{((q))})^{sat}/J_0^{((q))}) = 2q-1$ and $s((J_0^{[q]})^{sat}/J_0^{[q]}) = 3q-2$. By Proposition $1.3$, it follows that $\reg(I)=2$, $\reg(I^{q})=\reg(I^{((q))})=2q$ and $\reg(I^{[q]})=3q-1$.

Since $I=(x)\cap(x^2,y)$ is also the irreducible irredundant decomposition of $I$, by Corollary $2.8$ and Corollary $2.9$, we can compute directly $\reg(I)=\max\{1-1+1, 2+1-2+1\} = 2$ and, respectively, $\reg(I^{[q]})=\max\{q-1+1, 2q+q - 2 + 1\} = 3q-1$. 

Note that $\reg(I^{[q]}) = q\reg(I) + (q-1)(2-1)$ and therefore, the upper bound given in Proposition $2.11$ is the best possible.
\end{exm}


\vspace{2mm} \noindent {\footnotesize
\begin{minipage}[b]{15cm}
 Mircea Cimpoeas, Simion Stoilow Institute of Mathematics of the Romanian Academy\\
 E-mail: mircea.cimpoeas@imar.ro
\end{minipage}}

\begin{thebibliography}{99}

  \bibitem[1]{BS} D.\ Bayer, M.\ Stillman, \textit{A criterion for detecting m-regularity}, Invent. Math \textbf{87} (1987), 1-11.


  \bibitem[2]{BG} I.\ Bermejo, P.\ Gimenez, \textit{Saturation and Castelnuovo-Mumford regularity}, Journal of Algebra, \textbf{303 no. 2}(2006), 592-617.
  \bibitem[3]{CS} G.\ Caviglia, E.\ Sbarra, \textit{Characteristic-free bounds for the Castelnuovo Mumford regularity}, Compos. Math. \textbf{141 no.6} (2005), 1365-1373.

  \bibitem[4]{mircea} M.\ Cimpoeas "A stable property of Borel type ideals", Communications in Algebra, \textbf{vol
36 no 2}, 2008, p.674-677.
  \bibitem[5]{mir} M.\ Cimpoeas "Some remarks on Borel type ideals", Communications in Algebra, \textbf{vol
37, no 2}, 2009, p.724-727.
  
  \bibitem[6]{E} D.\ Eisenbud "Commutative algebra", Springer-Verlag, New York, 1995.

  \bibitem[7]{mono} J.\ Herzog, T.\ Hibi, \textit{Monomial Ideals}, Graduate Texts in Mathematics \textbf{260} , Springer, 2010.
  \bibitem[8]{hpv} J.\ Herzog, D.\ Popescu, M.\ Vladoiu, \textit{On the Ext-Modules of ideals of Borel type},
             Contemporary Math. \textbf{331} (2003), 171-186.

  
\end{thebibliography}
\end{document}